\let\oldmarginpar\marginpar
\renewcommand\marginpar[1]
\newcommand{\la}{\langle}
\newcommand{\ra}{\rangle}
\newtheorem{theorem}{\bf Theorem}[section]
\newtheorem{lemma}[theorem]{\bf Lemma}
\newcommand{\PP}{{\Bbb P}}
\newcommand{\RR}{{\Bbb R}}
\newcommand{\ZZ}{{\Bbb Z}}
\newcommand{\klie}{{\frak k}}
\newcommand{\slie}{{\frak s}}
\newcommand{\ggreat}{>\kern-.7ex>}
\newcommand{\ssmall}{<\kern-.7ex<}
\newcommand{\qu}{/\kern-.7ex/}
\newcommand{\exh}{\to\kern-1.8ex\to}
\newcommand{\gG}{{\EuScript{G}}}
\newcommand{\lL}{{\EuScript{L}}}
\newcommand{\nN}{{\EuScript{N}}}
\newcommand{\sS}{{\EuScript{S}}}
\newcommand{\uU}{{\EuScript{U}}}
\newcommand{\GL}{\operatorname{GL}}
\newcommand{\Aut}{\operatorname{Aut}}
\newcommand{\Diff}{\operatorname{Diff}}
\newcommand{\Gr}{\operatorname{Gr}}
\newcommand{\id}{\operatorname{id}}
\newcommand{\Isom}{\operatorname{Isom}}
\renewcommand{\O}{\operatorname{O}}
\renewcommand{\vert}{\operatorname{vert}}
\newcommand{\ov}{\overline}
\newcommand{\imag}{{\mathbf{i}}}
\title[Isometry groups of closed Lorentz 4-manifolds are Jordan]
{Isometry groups of closed Lorentz 4-manifolds are Jordan}
\author{Ignasi Mundet i Riera}
\address{Facultat de Matem\`atiques i Inform\`atica\\
Universitat de Barcelona\\
Gran Via de les Corts Catalanes 585\\
08007 Barcelona \\
Spain}
\email{ignasi.mundet@ub.edu}
\date{January 11, 2019}
\subjclass[2010]{57S17,54H15}
\thanks{This work has been partially supported by the (Spanish) MEC Project
MTM2015-65361-P}
\begin{document}

\maketitle

\begin{abstract}
We prove that for any closed Lorentz $4$-manifold $(M,g)$ the isometry group
$\Isom(M,g)$ is Jordan. Namely, there exists a constant $C$ (depending on $M$ and $g$)
such that any finite subgroup $\Gamma\leq\Isom(M,g)$ has an abelian subgroup
$A\leq\Gamma$ satisfying $[\Gamma:A]\leq C$.
\end{abstract}

\section{Introduction}

It is well known that the isometry group of any closed\footnote{Recall that a manifold is closed if it is compact and has no boundary.}  Lorentz manifold
is a Lie group\footnote{See e.g. Lemma \ref{lemma:convergencia} below for some references.}. This is in
fact true for metrics of arbitrary signature. The case of Riemannian metrics, proved originally
by Myers and Steenrod \cite{MyS}, is probably the most well known example. But there is an important aspect in which the Riemannian (or definite) case differs from the other ones: while the isometry group of any closed Riemannian manifold is compact,
if $g$ is an indefinite metric on a closed manifold $M$
then its isometry group $\Isom(M,g)$ may fail to be compact and may even have infinitely many connected
components (see e.g. \cite[\S 1]{D} for some examples which are Lorentz metrics).

Many papers have been written in the past decades studying closed Lorentz manifolds
whose isometry group is noncompact. In particular, it is by now well understood
which noncompact connected Lie groups may appear as the
identity component of the isometry group of some closed Lorentz manifold,
see \cite{AS1,AS2,G,K,Ze,Zi}.
However, the possible structure of the entire group of isometries remains much more mysterious, although some partial results are now available \cite{PZ}.

Our purpose in this note is to study a particular question related to the finite groups of isometries of closed Lorentz manifolds, which is particularly interesting when the entire isometry group contains infinitely many components.

Let us recall that a group $\gG$ is said to be $C$-Jordan (where $C$ is a positive
number) if every finite subgroup $G\leq\gG$ has an abelian subgroup $A\leq G$ satisfying $[G:A]\leq C$.
We say that $\gG$ is Jordan if it is $C$-Jordan for some $C$.
Roughly speaking, a group is Jordan if there is a bound on {\it how much nonabelian} its finite subgroups can be.

A classical theorem of Camille Jordan states that $\GL(n,\RR)$ is Jordan for every $n$. One can easily deduce from
this fact that any (finite dimensional) Lie group with finitely many connected components is Jordan,
see e.g. \cite{BW,Po3}.
So all compact Lie groups are Jordan, and in particular the
isometry group of any closed Riemannian manifold is Jordan.
The analogous question of whether isometry groups of closed Lorentz manifolds are Jordan
is much more interesting, because they can have infinitely many
connected components.

Of course, if a closed manifold $M$ has Jordan diffeomorphism group then a fortiori for every Lorentz
metric $g$ on $M$ the isometry group $\Isom(M,g)$ is going to be Jordan. It is known that the diffeomorphism
group of any closed manifold of dimension at most three is Jordan \cite{M0,Z}, but in any
dimension bigger than three
there exist closed manifolds whose diffeomorphism group is not Jordan \cite{CPS,M6} (many more things are
known in dimension four, and also in higher dimensions, see \cite{MS}).
Hence, four is the lowest dimension
in which the problem of Jordanness of isometry groups of closed
Lorentz manifolds is nontrivial. This is our main theorem:

\begin{theorem}
\label{thm:main}
The isometry group of any closed Lorentz 4-manifold is Jordan.
\end{theorem}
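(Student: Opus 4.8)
The plan is to split the argument according to whether the isometry group $\Isom(M,g)$ is compact or not. If $\Isom(M,g)$ has finitely many connected components then it is a Lie group with finitely many components, hence Jordan by the classical consequence of Jordan's theorem on $\GL(n,\RR)$ recalled in the introduction, and we are done. So the entire difficulty lies in the case where $\Isom(M,g)$ has infinitely many connected components; this is the case where one genuinely has to use the Lorentz geometry of $(M,g)$ in dimension $4$.

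The strategy for the noncompact case is to understand the algebraic structure forced on a finite subgroup $\Gamma\leq\Isom(M,g)$ by the geometry. Let $G=\Isom(M,g)$, with identity component $G_0$, and let $G_0$ act on $M$ with a Killing Lie algebra $\glie=\operatorname{Lie}(G_0)$. A finite $\Gamma\leq G$ normalizes no subgroup a priori, but it acts on the (finite-dimensional) space of Killing fields and on the connected components $\pi_0(G)$. First I would invoke the structure theory of identity components of isometry groups of closed Lorentz manifolds (the work of Adams--Stuck, Zeghib, Zimmer et al.\ cited as \cite{AS1,AS2,Ze,Zi}): $\glie$ is, up to compact and abelian factors, built from very restricted pieces — essentially copies of $\sllie(2,\RR)$, Heisenberg algebras, and affine/oscillator algebras. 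The key point to extract is that $G_0$ carries a $G$-invariant structure — for instance an invariant timelike or lightlike direction field, or an invariant metric on a relevant quotient — that constrains how $\Gamma$ can act. Then I would bound, using Jordan's theorem applied to the linear action of $\Gamma$ on $\glie$ and on $H^*(M)$, the index of the subgroup $\Gamma_0=\Gamma\cap N$ where $N$ is an appropriate finite-index normal "controlled" part, and analyze $\Gamma_0$ directly: it preserves enough geometric data that one can build an abelian subgroup of bounded index inside it, e.g.\ by averaging to produce an invariant Riemannian metric on $M$ or on a lower-dimensional leaf space and then applying the Riemannian (compact-group) case.

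More concretely, the mechanism I expect to be decisive is the following dichotomy for the action of a finite subgroup $\Gamma$: either $\Gamma$ preserves a Riemannian metric obtained by averaging $g$ with a compatible positive form over $\Gamma$ — in which case $\Gamma$ embeds in a compact Lie group and Jordanness is immediate — or $\Gamma$ moves such structures around, but then the noncompact directions of $G_0$ combined with the fact that $M$ is only $4$-dimensional force $\Gamma$ to lie (up to bounded index) inside $G_0\cdot(\text{small finite-order symmetry})$, and one analyzes finite subgroups of $\langle G_0,\Gamma\rangle$ using that $G_0$ itself has Jordan-with-uniform-constant finite subgroups together with a bound on $|\pi_0$ of the relevant stabilizers$|$. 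Throughout, one uses that all constants may depend on $(M,g)$, so any finite list of bad components or any fixed-dimensional cohomology obstruction only costs a multiplicative constant. The bookkeeping of how the finitely-many "exceptional" isometry classes interact with $G_0$ is where the uniform bound $C$ is assembled.

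The main obstacle, I expect, is precisely this last point: controlling $\Gamma$ modulo $G_0$ when $\pi_0(G)$ is infinite. Unlike the finite-component case there is no cheap reduction to linear algebra, and one must genuinely use that a closed Lorentz $4$-manifold with non-compact isometry group has a very rigid structure (a recurrent lightlike vector field, a local warped-product or plane-wave model, a Killing algebra from the Adams--Stuck--Zeghib list) to show that the "component group as seen by finite subgroups" is effectively bounded — that is, that any finite $\Gamma$ meets only boundedly many components in a way that matters, or that the infinitely many components come from a torus or a unipotent flow that a finite group cannot exploit to be very nonabelian. Converting the geometric rigidity statements, which are typically phrased for connected groups or for the full isometry group, into a uniform bound valid for all finite subgroups simultaneously is the crux, and is presumably where the four-dimensionality is used in an essential (case-by-case) way.
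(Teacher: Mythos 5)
Your proposal is a research plan rather than a proof, and the step you yourself identify as ``the crux'' --- controlling a finite subgroup $\Gamma$ modulo $G_0$ when $\pi_0(\Isom(M,g))$ is infinite --- is exactly the step that is left unresolved. None of the mechanisms you sketch closes it. In particular, the ``dichotomy'' you propose is not a dichotomy: \emph{every} finite group of diffeomorphisms preserves an averaged Riemannian metric, so every finite $\Gamma$ embeds in a compact Lie group; this is useless for Jordanness because that compact group depends on $\Gamma$ and there is no uniform constant. Likewise, the Adams--Stuck/Zeghib/Zimmer structure theory describes the \emph{identity component} (or connected noncompact subgroups) of the isometry group, and converting it into a statement about arbitrary finite subgroups scattered across infinitely many components is precisely the open part of your plan, not a consequence of the cited results.

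The paper's actual route is quite different and bypasses the structure theory entirely. It invokes a purely four-dimensional criterion (Theorem 1.4 of \cite{MS}): Jordanness of a group of diffeomorphisms of a closed oriented $4$-manifold follows from a uniform bound on $|\deg\nu|$ for the normal bundles $\nu$ of orientable surface components of fixed-point sets of finite-order elements of order greater than $2$ (this targets the only known source of non-Jordanness in dimension $4$, the Csik\'os--Pyber--Szab\'o examples). The Lorentz geometry then enters through a compactness argument: fixing a reduction $TM\simeq L\oplus S$ to the maximal compact of $\O(3,1)$ with associated Riemannian metric $g_R$, one shows that if $\deg\nu\neq 0$ then at some point of the fixed surface the normal plane must be contained in $S$ (otherwise $\nu$ would admit a line subbundle and have degree $0$), whence the differential of the isometry there preserves $g_R$; a parallel-transport argument on the frame bundle (Lemma \ref{lemma:convergencia}) then shows the set of all such isometries is relatively compact in $\Isom(M,g)$. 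This confines the relevant elements to finitely many components, and a finiteness result for orbits of fixed submanifolds of compact group actions (Lemma \ref{lemma:fixed-submanifolds}) yields the uniform degree bound. Without the reduction to the normal-bundle criterion and the relative-compactness lemma, your outline does not constitute a proof.
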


Actually, the only known examples of closed 4-manifolds with non Jordan
group of diffeomorphisms are $T^2\times S^2$ and the nontrivial $S^2$-fibration over $T^2$.
Both manifolds admit Lorentz metrics whose isometry group has infinitely many components,
as follows easily from the examples in \cite[\S 1]{D}. Moreover, we have:

\begin{theorem}
\label{thm:main-2}
Let $M$ be either $T^2\times S^2$ and the nontrivial $S^2$-fibration over $T^2$.
For any positive number $C$ there exists a Lorentz metric $g$ on $M$ and a finite
subgroup $\Gamma\leq\Isom(M,g)$ all of whose abelian subgroups $A\leq G$ satisfy
$[G:A]>C$.
\end{theorem}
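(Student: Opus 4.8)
The plan is to exhibit, for each target constant $C$, a Lorentz metric on $M$ whose isometry group contains a finite subgroup modelled on the known non-Jordan subgroups of $\Diff(T^2\times S^2)$ and of the nontrivial $S^2$-bundle over $T^2$. Recall that the non-Jordanness of these diffeomorphism groups (from \cite{CPS,M6}) comes from finite subgroups $\Gamma$ built as follows: one takes a large prime $p$, a Heisenberg-type central extension
\[
1\to\ZZ/p\to\Gamma\to (\ZZ/p)^2\to 1,
\]
where the $(\ZZ/p)^2$ acts on the base $T^2$ by translations of order $p$ (in the two circle factors), and the central $\ZZ/p$ acts by rotations on the $S^2$ fibre; the non-commutativity of the fibrewise and base actions is forced by a nontrivial action of the base translations on the fibration data (equivalently, by the fact that the two commuting base translations do not lift to commuting diffeomorphisms of the total space). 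Such a $\Gamma$ has no abelian subgroup of index $\le p$, so letting $p\to\infty$ defeats any fixed $C$. The task is therefore to make all of these diffeomorphisms \emph{simultaneously isometric} for one cleverly chosen Lorentz metric.

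\textbf{Step 1: the flat model on the base.} Equip $T^2=\RR^2/\ZZ^2$ with a flat Lorentz metric $dx\,dy$ (or $dx^2-dy^2$); all translations of $T^2$ are isometries, in particular the order-$p$ translation subgroup $(\ZZ/p)^2$.

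\textbf{Step 2: the warped/twisted product.} Realise $M$ as an $S^2$-bundle over $T^2$ with structure group $\SO(3)$ (acting by rotations on the round $S^2$), and put on $M$ a metric that is, fibrewise, a fixed multiple of the round metric on $S^2$ and, in the base directions, the pullback of the flat Lorentz metric, glued using a connection on the bundle. For the trivial bundle take the product metric; for the nontrivial bundle take the unique nontrivial $\SO(3)$-bundle over $T^2$ and a metric adapted to a chosen connection. The isometry group of such a metric contains the fibrewise rotations that are compatible with the connection, which includes at least a circle's worth in the trivial case — but for the twisted non-Jordan examples we only need a finite central $\ZZ/p$ of fibre rotations. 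The key point, which I expect to be the main obstacle, is to choose the bundle, the connection, and the lifts of the order-$p$ base translations so that (a) each lift is an isometry of $M$, and (b) the subgroup they generate together with the central fibre rotation is exactly the Heisenberg-type group $\Gamma$ with $[\Gamma:A]>C$ for all abelian $A\le\Gamma$. Concretely one wants the two base translations to lift to isometries whose commutator is the central rotation of order $p$ — this is a cohomological condition matching the Euler class / the clutching data of the bundle with the chosen connection, and verifying it is the technical heart of the argument.

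\textbf{Step 3: checking the index bound and concluding.} Once $\Gamma\le\Isom(M,g)$ is produced with the presentation $1\to\ZZ/p\to\Gamma\to(\ZZ/p)^2\to 1$ and nontrivial commutator pairing, a standard counting argument shows every abelian subgroup $A\le\Gamma$ satisfies $[\Gamma:A]\ge p$: an abelian subgroup must lie in the preimage of an isotropic (for the commutator pairing) subgroup of $(\ZZ/p)^2$, hence has image of order $\le p$ and so order $\le p^2$, whence index $\ge p$ in the group $\Gamma$ of order $p^3$. Choosing $p>C$ finishes the proof for both manifolds. The only difference between the two cases is which $S^2$-bundle over $T^2$ (and hence which Lorentz metric) one starts with; the group-theoretic construction of $\Gamma$ and the index estimate are identical.

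The step I expect to be genuinely delicate is Step 2: matching the requirement that the base translations lift to \emph{isometries} (not merely diffeomorphisms) with the requirement that those lifts fail to commute in the prescribed way. One must exploit that the flat Lorentz torus has a huge, noncompact isometry group and that a suitable invariant connection on the $S^2$-bundle has holonomy/monodromy realising the desired $\ZZ/p$ commutator; making this explicit — essentially writing the bundle as a quotient of $\RR^2\times S^2$ by a group of isometries of a flat-times-round Lorentzian metric that includes the Heisenberg group — is where the real work lies, though it is closely parallel to the diffeomorphism-group constructions in \cite{CPS,M6} and should go through with the metric simply carried along for the ride.
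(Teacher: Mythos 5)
Your proposal follows essentially the same route as the paper: the same Heisenberg-type group $\Gamma_n$ acting on an $S^2$-bundle over $T^2$, the flat indefinite metric $d\theta_1^2-d\theta_2^2$ on the base (invariant under all translations), a fibrewise Riemannian metric glued to the base metric by a connection, and the standard isotropic-subgroup counting argument giving $[\Gamma_n:A]\geq n$ for every abelian $A\leq\Gamma_n$. The one step you leave open --- your Step 2, which you call the technical heart --- is the only place the paper does any work, and it is resolved more cheaply than you anticipate. There is no new cohomological matching condition between the lifts and the clutching data to verify: the effective smooth action of $\Gamma_n$ on $X=\PP(L_n\oplus L_n^{-1})\simeq T^2\times S^2$ by line-bundle automorphisms covering order-$n$ translations of $T^2$ is already supplied by \cite{CPS,Mum,M5,M6}, so the lifts and their prescribed commutator (the central fibre rotation) come for free. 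All that remains is to make this given action isometric for \emph{some} Lorentz metric, and since $\Gamma_n$ is finite one simply averages: average a connection on $L_n$ and a Euclidean metric on the vertical tangent bundle $T^{\vert}X$ over $\Gamma_n$ to obtain invariant ones, use the invariant connection to split $TX\simeq T^{\vert}X\oplus\pi^*TT^2$ equivariantly, and set $g=g_V+\pi^*g_T$. The reason the Lorentz signature causes no trouble is that the indefinite summand $\pi^*g_T$ needs no averaging --- it is automatically invariant because the induced action on $T^2$ is by translations --- while the pieces that do get averaged (the connection and $g_V$) are positive definite, so averaging cannot degenerate them. With this observation your Step 2 closes, and your Steps 1 and 3 are exactly as in the paper.
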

\begin{proof}
Let us briefly recall the main construction in \cite{CPS}.
Choose an orientation of $T^2$.
For any natural number $n$ denote by $\Gamma_n$ the Heisenberg group
$\Gamma_n=\la a,b,c\mid a^n=b^n=c^n=[a,c]=[b,c]=1,\,[a,b]=c\ra$,
and let $L_n\to T^2$ be a complex line bundle of degree $n$. There is an
effective smooth action of $\Gamma_n$ on $L_n$ by line bundle automorphisms
(see \cite{Mum,M5,M6}) which gives rise to an action on the projectivisation
$\PP(L_n\oplus L_n^{-1})$; the latter is effective provided $n$ is odd. The action
of $\Gamma_n$ on $L_n$ lifts an action on $T^2$ which can be described in terms
of an identification $T^2\simeq S^1\times S^1$ by the formulas
$$a\cdot(\theta_1,\theta_2)=(e^{2\pi\imag/n}\theta_1,\theta_2), \quad
b\cdot(\theta_1,\theta_2)=(\theta_1,e^{2\pi\imag/n}\theta_2), \quad
c\cdot(\theta_1,\theta_2)=(\theta_1,\theta_2).$$
For any $n$ there is a diffeomorphism $\PP(L_n\oplus L_n^{-1})\simeq T^2\times S^2$.
Since any abelian subgroup $A\leq\Gamma_n$ satisfies $[\Gamma_n:A]\geq n$ (see \cite{Zar}),
this proves that $\Diff(T^2\times S^2)$ is not Jordan.

To prove the theorem let us endow $\PP(L_n\oplus L_n^{-1})$ with a $\Gamma_n$-invariant
Lorentz metric. Consider the metric $g_T=d\theta_1^2-d\theta_2^2$ on $T^2$. The previous
formulas imply that $g_T$ is invariant under the action of $\Gamma_n$ on $T^2$. Let us denote
$X=\PP(L_n\oplus L_n^{-1})$ and let $\pi:X\to T^2$ be the projection. There is an exact
sequence of vector bundles on $X$
$$0\to T^{\vert}X\to TX\to \pi^*TT^2\to 0,$$
where $T^{\vert}X$ is the vertical tangent bundle on $X\to T^2$.
Choose a $\Gamma_n$-invariant connection on $L_n$. This induces an invariant connection on $TX$,
hence an equivariant isomorphism $TX\simeq T^{\vert}X\oplus \pi^*TT^2$. Let $g_V$ be
a $\Gamma_n$-invariant euclidean metric on $T^{\vert}X$. Then $g_V+\pi^*g_T$ is a
$\Gamma_n$-invariant Lorentz metric on $X$.
\end{proof}

Of course we have a completely analogous result for the nontrivial $S^2$ fibration on $T^2$.

The proof of Theorem \ref{thm:main} is given in Section \ref{s:proof-main-thm}.
Before that, we prove in Section \ref{s:group-actions} a technical result on smooth
actions of compact groups.

\section{Compact group actions and fixed submanifolds}
\label{s:group-actions}

The following result is probably well known in the context of compact transformation
groups. Since we have not found it explicitly written in the literature we give a detailed proof.

\begin{lemma}
\label{lemma:fixed-submanifolds}
Let $K$ be a compact Lie group acting smoothly on a closed manifold
$M$. Let $\sS$ be the set of all submanifolds $X\subseteq M$ for which
there is some element $k\in K$ such that $X$ is a connected component
of the fixed point set $M^k$. The action of $K$ on $M$ induces an action
on the set $\sS$. Then the number of $K$-orbits in $\sS$ is finite.
\end{lemma}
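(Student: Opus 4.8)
The plan is to reduce the statement to a local question near each fixed submanifold and then invoke compactness. Fix $X\in\sS$, say $X$ is a connected component of $M^k$. The key local fact is that the \emph{germ} of $X$ inside $M$, together with the induced linear action of the (compact) closure $\ov{\la k\ra}$ on the normal bundle, is rigid in the following sense: there are only finitely many possibilities for such germs up to $K$-equivalence. To make this precise I would first observe that, by the slice theorem for compact group actions, near any point $x\in M$ the action of the stabilizer $K_x$ on $M$ is linearizable, i.e. locally modelled on a linear representation of $K_x$ on $T_xM$; and $K_x$, being a closed subgroup of $K$, has only finitely many conjugacy classes of elements of finite order up to the subtleties of the component group — but more usefully, the normal representation at $x$ is locally constant along $X$.

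Here is the structure I would follow. First, for each fixed submanifold $X$ and each $x\in X$, the connected component $X$ of $M^k$ through $x$ is, in a $K_x$-invariant chart around $x$, exactly the fixed subspace $(T_xM)^k$ of the linear $k$-action; hence the pair (tangent space of $X$, normal space) at $x$ is determined by the conjugacy class of the image of $k$ in $\GL(T_xM)$. Second — and this is the point that makes the set $\sS$ itself well-behaved rather than just its pointwise invariants — I would argue that $\sS$ carries a natural topology (say, as a subset of a suitable space of submanifolds, or via the points $x$ together with the subspace $(T_xM)^k\subseteq T_xM$) in which $\sS$ is compact and the $K$-action is continuous, and that the equivalence "being in the same $K$-orbit" is an open condition. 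An open equivalence relation on a compact space has finitely many classes, which is exactly the conclusion.

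Concretely, to see that $K$-orbit-equivalence is open near a given $X$: pick $x\in X$, work in a slice chart $U\cong V$ (a $K_x$-representation) around $x$. Any $X'\in\sS$ passing near $x$, being a component of some $M^{k'}$, corresponds in the chart to a fixed subspace of a linear map; as $k'$ ranges over a neighborhood in $K$ the subspace $(T_xM)^{k'}$ takes only finitely many values (the fixed subspaces of elements of a compact group near a given element stabilize, because eigenvalue-$1$ multiplicity is lower semicontinuous and the relevant $k'$ lie in the compact group $\ov{\la k'\ra}$ whose structure is controlled), and $X'$ is forced to be the component of $M^{k'}$ agreeing with $X$ to first order, hence — shrinking the chart — literally the image of $X$ under a small element of $K$. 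Thus a whole neighborhood of $X$ in $\sS$ lies in the $K$-orbit of $X$. Covering the compact space $\sS$ (or rather $M$, and noting each point lies in only finitely many members of $\sS$ by the local finiteness just described) by finitely many such charts yields finitely many orbits.

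I expect the main obstacle to be the bookkeeping in the previous paragraph: making rigorous the claim that, in a fixed slice chart, only finitely many submanifolds of $\sS$ appear and that each is a small $K$-translate of the "central" one. The delicate point is that an element $k'\in K$ near $k$ need not be conjugate to $k$, nor need $\ov{\la k'\ra}$ be conjugate to $\ov{\la k\ra}$, so one cannot simply quote a conjugacy-class finiteness statement; instead one must argue directly that the \emph{fixed-point set component} $X'$ is determined, up to an arbitrarily small ambient $K$-motion, by its germ at a single point, and that germ takes finitely many values locally. This is where the hypothesis that $M$ is closed (hence $\sS$, suitably topologized, is compact) and that $K$ is compact (so slices and linearization are available, and so the relevant cyclic-group closures sit inside a fixed compact Lie group) both get used essentially.
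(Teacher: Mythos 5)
Your overall strategy---localize with the slice theorem, translate nearby members of $\sS$ so that they pass through a common point $x$, and observe that a member of $\sS$ through $x$ is determined by its tangent space there (hence by the fixed subspace $(T_xM)^{k'}$ of some $k'\in K_x$)---matches the skeleton of the paper's argument. But the step you lean on to finish, namely that ``as $k'$ ranges over a neighborhood in $K$ the subspace $(T_xM)^{k'}$ takes only finitely many values,'' is false, and it is exactly where the real work lies. Counterexample: let $K=\SO(3)$ act on $S^3\subset\RR^3\oplus\RR$ through the first factor. The fixed point set of a rotation about a line $\ell\subset\RR^3$ is the great circle $S^3\cap(\ell\oplus\RR)$; all of these pass through the pole $x=(0,0,0,1)$, where $K_x=\SO(3)$, and their tangent spaces $(T_xM)^{k'}=\ell$ sweep out \emph{all} lines through the origin as $k'$ varies. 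The germs at $x$ form a continuum, not a finite set; your parenthetical justification via semicontinuity of the eigenvalue-$1$ multiplicity only controls the \emph{dimension} of the fixed subspace, not the subspace itself. (The same example defeats the remark that each point lies in only finitely many members of $\sS$.)

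What is true, and what you actually need, is that these tangent subspaces fall into finitely many $K_x$-\emph{orbits}. That statement is not free: it is an instance of the lemma itself, for the linearized action of $K_x$ on the unit sphere of $T_xM$, which is a closed manifold of strictly smaller dimension. This is why the paper runs an induction on $\dim M$: after using the slice theorem to arrange $k_i\in K_x$ and $x\in X_i$ essentially as you propose, it applies the induction hypothesis to the $K_x$-action on the unit sphere $S\subset T_xM$ and the submanifolds $T_xX_i\cap S$, concludes that two of the tangent spaces are $K_x$-conjugate, and then upgrades $T_xX_j=kT_xX_i$ to $X_j=kX_i$ by the exponential-map/connectedness argument you also sketch. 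Your proposal contains no substitute for this inductive step, so as written the proof does not close.
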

\begin{proof}
We use ascending induction on the dimension of $M$. The case $\dim M=0$
being obvious, we assume that $\dim M>0$ and that the lemma holds true
for manifolds of dimension less than $\dim M$. We will argue by contradiction,
so let us assume that there is an infinite sequence of submanifolds of $M$, $(X_i)$,
and elements of $K$, $(k_i)$, with $X_i$ a connected component of $M^{k_i}$,
in such a way that for any $i\neq j$ there is no $k\in K$ such that $X_j=kX_i$.

Choose a $K$-invariant Riemannian metric $\rho$ on $M$. Since $M$ is compact, we may
assume, replacing $(X_i)$ and $(k_i)$ by subsequences if necessary, that there
exist points $x_i\in X_i$ such that $(x_i)$ converges to some $x\in M$. Let $\klie(x)\subseteq T_xM$ be
the tangent space to the $K$-orbit through $x$, and let $\slie$ be the $\rho$-orthogonal
of $\klie(x)$. The isotropy group $K_x$ at $x$ acts on $T_xM$ preserving $\slie$,
and the map $\ov{e}:K\times\slie\ni (k,s)\mapsto k\exp_x^{\rho}s$ satisfies $\ov{e}(kk',s)=\ov{e}(k,k's)$,
so it descends to a map $e:K\times_{K_x}\slie\to M$. The projection map $\pi:K\times_{K_x}\slie\to K/K_x$ gives a structure of $K$-equivariant vector bundle on $K\times_{K_x}\slie$, the group $K$ acting naturally on the left on the total space and the base.
As is well known, the restriction of $e$ to an invariant neighborhood $\nN$
of the zero section of this bundle is
an equivariant embedding (this is the slice theorem for smooth compact group actions).
We identify $\nN$ with its image in $M$.

Assume, for the remainder of the proof, that $i$ is big enough so that $x_i$ is contained in $\nN$.
Suppose that $\pi(x_i)=h_iK_x$, with $h_i$. Replacing $X_i$ by $h_i^{-1}X_i$ (and $k_i$
by $h_i^{-1}k_ih_i$) we may assume
that $\pi(x_i)=K_x$. Now, the stabilizer of any $y\in\pi^{-1}(K_x)$ is contained in $K_x$,
so in particular $k_i\in K_x$. This implies that $x\in X_i$. Let $S$ be the unit sphere in
$T_xM$. Applying the induction hypothesis to the action of $K_x$ on $S$ and looking at
the sequence of submanifolds $(T_xX_i\cap S)$ of $S$ we may conclude that there exists some $i\neq j$
and some $k\in K_x$ such that $T_xX_j\cap S=k(T_xX_i\cap S)$, which implies that
$T_xX_j=kT_xX_i$ and hence $X_j=kX_i$ (this is standard: since the exponential map is $K_x$-equivariant,
we have $\exp^{\rho}_x(T_xX_j)\subseteq X_j$, and similarly $\exp^{\rho}_x(kT_xX_i)\subseteq kX_i$; hence $kX_i\cap X_j$ has nonempty interior both in $kX_i$ and $X_j$; the same argument allows
to prove more generally that $kX_i\cap X_j$ is open in $kX_i$ and $X_j$, and since the intersection
is also closed and both $kX_i$ and $X_j$ are connected, it follows that $X_j=kX_i$).
We have reached a contradiction, so this concludes the proof of the lemma.
\end{proof}

\section{Normal bundles of surfaces fixed by periodic isometries}
\label{s:proof-main-thm}

To begin with, let us observe that to prove Theorem \ref{thm:main}
it suffices to consider closed oriented Lorentz
$4$-manifolds (see \cite[\S 2.3]{M0} or Lemma 2.1 in \cite{MS}).

Let $(M,g)$ be a closed oriented Lorentz 4-manifold. Let $F_gM$ be the principal $\O(3,1)$-bundle
of $g$-orthonormal frames of $M$. Any isometry $\phi$ of $(M,g)$ induces a diffeomorphism of $F_gM$
which we denote by $D\phi$, and we denote by $d\phi$ the diffeomorphism of $TM$ induced by $\phi$.

\begin{lemma}
\label{lemma:convergencia}
Suppose that $(\phi_i)$ is a sequence of isometries of $(M,g)$, and that there exists
a converging sequence $(z_i)\subset F_gM$ such that $(D\phi_i(z_i))$ converges somewhere in $F_gM$.
Then $(\phi_i)$ has a subsequence converging in $\Isom(M,g)$.
\end{lemma}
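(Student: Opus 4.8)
The plan is to use the classical fact that the orthonormal frame bundle $F_gM$ carries a canonical absolute parallelism with respect to which every isometry acts rigidly. Let $\theta\in\Omega^1(F_gM;\RR^4)$ be the tautological (solder) form and $\omega\in\Omega^1(F_gM;\mathfrak{o}(3,1))$ the Levi--Civita connection form; together $\theta\oplus\omega$ trivialises $T(F_gM)$. For any isometry $\phi$ the lift $D\phi$ is $\O(3,1)$-equivariant and satisfies $(D\phi)^*\theta=\theta$ and $(D\phi)^*\omega=\omega$ (the latter because isometries preserve the Levi--Civita connection, or directly from the first structure equation). Conversely, an $\O(3,1)$-equivariant diffeomorphism of $F_gM$ preserving $\theta$ descends to an isometry of $(M,g)$, since $\theta$-preservation forces the induced map on $M$ to carry $g$-orthonormal frames to $g$-orthonormal frames. (All of this is standard; see Kobayashi, \emph{Transformation groups in differential geometry}, and Kobayashi--Nomizu, vol.~I.) Hence it suffices to prove that $(D\phi_i)$ converges in $C^\infty_{\mathrm{loc}}(F_gM)$ to such a map, and then to recover $\phi_i$ from $D\phi_i$.

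Let $E_1,\dots,E_{10}$ be the vector fields dual to the coframe $\theta\oplus\omega$; each $D\phi_i$ preserves all of them. For $p\in F_gM$ define the ``coframe exponential'' $\Phi_p$ by letting $\Phi_p(v)$ be the time-one flow of $\sum_k v_kE_k$ started at $p$. A finite family of smooth vector fields has flows defined for a uniform time on a neighbourhood of any given point, so $\Phi_p$ is defined on a ball $B_\varepsilon\subset\RR^{10}$ and is a diffeomorphism onto a neighbourhood of $p$, with $\varepsilon$ and the size of the image locally uniform in $p$; moreover $\Phi_p(v)$ depends smoothly jointly on $(p,v)$, by smooth dependence of flows on initial conditions. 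Here one must be a little careful because $F_gM$ is \emph{not} compact and $(M,g)$ need not be geodesically complete, so the horizontal fields among the $E_k$ need not be complete; this is the only genuinely delicate point and it is handled below by a local continuation argument. Since each $D\phi_i$ preserves the $E_k$, it conjugates the relevant flows to themselves, whence $D\phi_i\circ\Phi_p=\Phi_{D\phi_i(p)}$ wherever both sides are defined.

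Set $z=\lim z_i$ and $w=\lim D\phi_i(z_i)$. For $i$ large, $z_i$ and $D\phi_i(z_i)$ lie in fixed neighbourhoods of $z$ and $w$ on which the above holds with a common radius, so $D\phi_i=\Phi_{D\phi_i(z_i)}\circ\Phi_{z_i}^{-1}$ on a fixed neighbourhood of $z$; as $\Phi_{z_i}\to\Phi_z$ and $\Phi_{D\phi_i(z_i)}\to\Phi_w$ in $C^\infty$ on $B_\varepsilon$, we get $D\phi_i\to\Phi_w\circ\Phi_z^{-1}$ in $C^\infty$ near $z$. Let $\Omega\subseteq F_gM$ be the set of points near which $(D\phi_i)$ converges in $C^\infty$ to some smooth map. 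Then $\Omega$ is open, nonempty, and $\O(3,1)$-invariant (since $D\phi_i(xA)=D\phi_i(x)A$). It is also closed: if $q\in\overline\Omega$, pick $q'\in\Omega$ so close to $q$ that $q\in\Phi_{q'}(B_\varepsilon)$; evaluating the local limit at $q'$ shows $D\phi_i(q')$ converges, say to $w'$, and repeating the argument with the constant point $q'$ in place of $z_i$ gives $D\phi_i=\Phi_{D\phi_i(q')}\circ\Phi_{q'}^{-1}\to\Phi_{w'}\circ\Phi_{q'}^{-1}$ in $C^\infty$ on $\Phi_{q'}(B_\varepsilon)\ni q$. Since $M$ is connected (which we may assume), $\pi(\Omega)$ being open, closed and nonempty forces $\Omega=F_gM$, so $D\phi_i\to\Psi$ in $C^\infty_{\mathrm{loc}}(F_gM)$ for a well-defined smooth map $\Psi\colon F_gM\to F_gM$.

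Finally, $\Psi$ preserves $\theta$ and $\omega$ (a $C^1$-limit of maps that do) and is $\O(3,1)$-equivariant (a $C^0$-limit), hence is a local diffeomorphism. Applying the whole argument to $(\phi_i^{-1})$ — which satisfies the hypothesis, with the roles of $z$ and $w$ exchanged, since $D\phi_i^{-1}(D\phi_i(z_i))=z_i\to z$ — produces a $C^\infty_{\mathrm{loc}}$-limit $\Psi'$ of $(D\phi_i^{-1})$ with $\Psi'\circ\Psi=\Psi\circ\Psi'=\id$, so $\Psi$ is a diffeomorphism. Therefore $\Psi=D\psi$ for a unique isometry $\psi$ of $(M,g)$, and since $\phi$ is recovered from $D\phi$ by composing with local sections of $F_gM$ over a finite cover of the compact manifold $M$, the convergence $D\phi_i\to D\psi$ gives $\phi_i\to\psi$ in $\Isom(M,g)$; in particular $(\phi_i)$ has a convergent subsequence. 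As noted, the main obstacle is the open--closed continuation of the third paragraph, which replaces the use of a global flow that would be available were $F_gM$ compact or the metric complete.
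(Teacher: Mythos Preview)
Your approach via the absolute parallelism $\theta\oplus\omega$ on $F_gM$ is sound and is indeed the mechanism behind the Hano--Morimoto lemma that the paper ultimately cites; you are in effect trying to reprove that lemma from scratch rather than invoke it. The paper proceeds differently: it first uses parallel transport along short $\nabla$-geodesics from $x=\pi(z)$ to $x_i=\pi(z_i)$ (these exist by compactness of $M$, with no completeness needed) to upgrade the hypothesis $D\phi_i(z_i)\to w$ to the stronger statement $D\phi_i(z)\to w$ at the \emph{fixed} frame $z$, and then cites \cite[Lemma~5]{HM} for the conclusion. So the paper outsources exactly the continuation step you attempt directly.

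There is, however, a genuine gap in your closedness argument. You choose $\varepsilon$ so that $\Phi_{p}$ is defined on $B_\varepsilon$ for $p$ near $q$, pick $q'\in\Omega$ with $q\in\Phi_{q'}(B_\varepsilon)$, and then assert $\Phi_{D\phi_i(q')}\to\Phi_{w'}$ on $B_\varepsilon$. For this you need $\Phi_{w'}$ to be defined on $B_\varepsilon$, but $\varepsilon$ was tailored to a neighbourhood of $q$, not of $w'$. It is true that each $\Phi_{D\phi_i(q')}$ is defined on $B_\varepsilon$ (because $D\phi_i$ conjugates maximal flows to maximal flows, so the domain of $\Phi_{D\phi_i(q')}$ equals that of $\Phi_{q'}$), but the domain of the flow is only \emph{lower} semicontinuous in the initial point: from $D\phi_i(q')\to w'$ and $\{D\phi_i(q')\}\times B_\varepsilon\subset\mathcal D$ one cannot conclude $\{w'\}\times B_\varepsilon\subset\mathcal D$. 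If you try to fix this by shrinking to an $\varepsilon'$ that works near $w'$, you must then re-choose $q'$ closer to $q$ so that $q\in\Phi_{q'}(B_{\varepsilon'})$, which changes $w'$, and the argument becomes circular. This is precisely the incompleteness obstacle you flag as ``the only genuinely delicate point'', but the open--closed argument as written does not overcome it.

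The cleanest repair is to do what the paper does implicitly: use the principal-bundle structure. Horizontal lifts of paths in the compact base $M$ exist globally (independently of geodesic completeness), so one can propagate convergence along horizontal curves and then use the complete vertical (fundamental) vector fields to fill out fibres; alternatively, imitate the paper's parallel-transport reduction to a fixed frame and then cite Kobayashi or Hano--Morimoto for the statement that convergence of $D\phi_i$ at a single frame forces convergence of $\phi_i$.
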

\begin{proof}
Let $\nabla$ be the Levi--Civita connection of $g$, and denote by $\Aut(M,\nabla)$ the
group of diffeomorphisms of $M$ that preserve $\nabla$. The group $\Aut(M,\nabla)$ has a structure of
finite dimensional Lie group \cite{CK,HM}, and with respect to this structure $\Isom(M,g)$ is a closed subgroup of $\Aut(M,\nabla)$ (hence $\Isom(M,g)$ is a Lie group). Consequently, it
suffices to prove the statement for sequences in $\Aut(M,\nabla)$.

Suppose that $z_i\to z$ and  that $D\phi_i(z_i)\to w$.
Denote by $\pi:F_gM\to M$ the projection. Let
$x_i=\pi(z_i)$ and $x=\pi(z)$, so that $x_i\to x$ and
\begin{equation}
\label{eq:phi-x-convergeix}
\phi_i(x_i)\to \pi(w).
\end{equation}
For any $y\in M$ and any tangent vector
$s\in T_yM$ sufficiently close to $0$ denote by $\exp^{\nabla}_y(v)\in M$ the image of the exponential map. For any big enough $i$ there is some $v_i\in T_xM$ such that $x_i=\exp^{\nabla}_x(v_i)$.
Furthermore, $v_i\to 0$. Let $u_i\in T_{x_i}M$ be the parallel transport of $v_i$ along the
curve $\gamma_i:[0,1]\ni t\mapsto \exp^{\nabla}_x(tv_i)$. Then $x=\exp^{\nabla}_{x_i}(-u_i)$ and
$u_i\to 0\in T_xM$.
Since $D\phi_i(z_i)$ converges in $F_gM$ and $u_i\to 0\in T_xM$, we have
\begin{equation}
\label{eq:vector-tangent-convergeix}
d\phi_i(u_i)\to 0\in T_{\pi(w)}M.
\end{equation}
Let $\zeta_i\in \pi^{-1}(x_i)\in F_gM$ be the parallel transport of $z$ along $\gamma_i$.
We have $\zeta_i=z_iX_i$ for some
$X_i\in\O(3,1)$, and
\begin{equation}
\label{eq:X-convergeix}
X_i\to 1
\end{equation}
because $z_i\to z$. Let $w_i$ be the parallel transport of $D\phi_i(z_i)$ along the
curve
$$[0,1]\ni t\mapsto \phi_i(\exp_{x_i}^{\nabla}(-tu_i))=\exp_{\phi_i(x_i)}^{\nabla}(-td\phi_i(u_i)).$$
Since $\phi_i$ preserves $\nabla$, $D\phi_i(z)$ is equal $w_iX_i$. Combining (\ref{eq:phi-x-convergeix}),
(\ref{eq:vector-tangent-convergeix})
and (\ref{eq:X-convergeix}) we conclude that $D\phi_i(z)\to w$.
At this point the lemma follows from \cite[Lemma 5]{HM}.
\end{proof}

The space of reductions of the structure group of $F_gM$ to the maximal
compact subgroup $K=\O(3,\RR)\times\ZZ/2<\O(3,1)$ can be identified with the sections
of the bundle $F_gM/K$, whose fibers can be identified with $\O(3,1)/K$, which is
contractible. It follows that reductions to $K$ exist. Take one
reduction, and fix it for the entire argument. The choice of a reduction
amounts to giving an isomorphism
$TM\simeq L\oplus S$, where $L,S\to M$ are two Euclidean vector bundles of ranks
$1$ and $3$ respectively, with the property that for any $u=(l,s)\in L\oplus S$ we have
$$g(u,u)=\|s\|^2-\|l\|^2,$$
where $\|\cdot\|$ denotes the Euclidean norms on $L$ and $S$.
Let $g_R$ denote the Riemannian metric on $M$ given by the Euclidean structures on $L$
and $S$.

For any $p\in M$ we denote by $\Isom(T_pM,g)$ (resp. $\Isom(T_pM,g_R)$) the linear isomorphisms
which preserve $g(p)$ (resp. $g_R(p)$).

\begin{lemma}
\label{lemma:N-dins-V}
Let $\phi\in\Isom(T_pM,g)$ be a finite order isometry which is not an involution.
Suppose that there is a $\phi$-invariant splitting
$T_pM=F\oplus N$ in $g$-orthogonal planes such that $\phi|_F=\id_F$ and $N\subset V$. Then
$\phi\in\Isom(T_pM,g_R)$.
\end{lemma}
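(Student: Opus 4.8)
Recall that $V\subseteq T_pM$ is the $3$-dimensional, positive definite summand determined by the fixed reduction, so $g|_V$ is Euclidean; the structural facts I will use are that $g(v,\cdot)=g_R(v,\cdot)$ for every $v\in V$ (the two metrics differ only on the complementary line), hence $g|_V=g_R|_V$ and $W:=V^{\perp_g}$ coincides with $V^{\perp_{g_R}}$, so that $T_pM=V\oplus W$ is an orthogonal splitting for \emph{both} $g$ and $g_R$. The plan is to show that $\phi$ preserves this splitting, after which preservation of $g_R$ is immediate.

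First I would identify $F$ with $N^{\perp_g}$: by hypothesis $T_pM=F\oplus N$ with $F$ and $N$ mutually $g$-orthogonal $2$-planes, and since $g$ is nondegenerate on $T_pM$ we have $\dim N^{\perp_g}=2$ and $F\subseteq N^{\perp_g}$, hence $F=N^{\perp_g}$. Now $N\subseteq V$ gives $W=V^{\perp_g}\subseteq N^{\perp_g}=F$, so the hypothesis $\phi|_F=\id_F$ forces $\phi$ to fix the line $W$ pointwise. In particular $\phi(W)=W$, and since $\phi\in\Isom(T_pM,g)$ it follows that $\phi(V)=\phi(W^{\perp_g})=W^{\perp_g}=V$. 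Thus $\phi$ preserves $T_pM=V\oplus W$.

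It then remains only to read off the conclusion: $\phi|_V$ preserves $g|_V=g_R|_V$; $\phi|_W=\id_W$ preserves $g_R|_W$ trivially; and $V$ and $W$ are $g_R$-orthogonal. Writing arbitrary vectors as $v+w$, $v'+w'$ with $v,v'\in V$, $w,w'\in W$ and expanding $g_R(\phi(v+w),\phi(v'+w'))$, the cross terms vanish by $g_R$-orthogonality and the diagonal terms are preserved by the previous two remarks, so $\phi$ preserves $g_R$, i.e.\ $\phi\in\Isom(T_pM,g_R)$.

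I do not expect any serious obstacle: the one point that matters is the observation that $N\subseteq V$ confines the line $W=V^{\perp_g}$ to the $\phi$-fixed plane $F$, and this alone pins $\phi$ to the timelike--spacelike splitting; everything else is bookkeeping. (In fact the finite-order and non-involution hypotheses do not seem to be needed for this argument; I expect they enter only upstream, to guarantee that such a splitting with $N\subseteq V$ arises in the situations where the lemma is applied.)
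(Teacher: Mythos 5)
Your proof is correct, and it isolates the same geometric mechanism that drives the paper's argument: because $N\subseteq V$, the negative line $W=V^{\perp_g}$ lies in $N^{\perp_g}=F$, so $\phi$ fixes $W$ pointwise, preserves $V=W^{\perp_g}$, and therefore respects the splitting $T_pM=V\oplus W$, on which $g$ and $g_R$ can be compared summand by summand. The paper packages the same observation as the construction of a $g_R$-orthogonal basis $e_1,\dots,e_4$ adapted to $N$ and $F$ (its $e_4$ spans exactly your $W$).

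The one genuine difference concerns the $g$-orthogonality of $F$ and $N$, and it bears on your closing parenthetical. You take that orthogonality from the statement (``splitting in $g$-orthogonal planes'') and correctly note that the finite-order and non-involution hypotheses are then idle. The paper's proof, however, does not use that clause of the hypothesis: it re-derives $F\perp_g N$, and that derivation is precisely where finite order and non-involutivity enter --- for each $f\in F$ one finds $0\neq n\in N$ with $g(f,n)=0$; since $\phi|_N$ is a Euclidean rotation of order greater than $2$ it has no real eigenvector, so $n$ and $\phi(n)$ span $N$ and $g(f,\phi(n))=g(f,n)=0$ forces $F\perp_g N$. The reason the author bothers is the application in Lemma~\ref{lemma:compact}, where the invariant splitting $T_xM=A_x\oplus B_x$ is defined by eigenvalues of $d\phi_i$ and is not a priori $g$-orthogonal. (In fact orthogonality is automatic there: for $a\in A_x$, $b\in B_x$, averaging $g(a,\phi^k b)$ over the cyclic group generated by $\phi$ shows $g(a,b)=g(a,\bar b)$ with $\bar b$ a $\phi$-fixed vector of $B_x$, hence $0$; so your shorter route does cover the intended application, but that averaging step would need to be said.) In short, your argument is a valid and cleaner proof of the lemma as literally stated; just be aware that the stated orthogonality hypothesis is doing real work for you, and that the hypotheses on the order of $\phi$ exist in the paper to establish that orthogonality rather than being decorative.
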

\begin{proof}
It suffices to consider the case $\phi\neq\id$.
Let $f\in F$ be any element. There exists some nonzero $n\in N$ such that
$g(f,n)=0$, since the $g$-orthogonal of $f$ has dimension $\geq 3$. Since $\phi$ has finite order
and is not an involution, and $\dim N=2$, $n$ and $\phi(n)$ are linearly independent. We have
$0=g(f,n)=g(\phi(f),\phi(n))=g(f,\phi(n))$, and hence $N$ is contained in the $g$-orthogonal
of $f$. Letting $f$ run along all elements of $F$ we conclude that $F,N$ are $g$-orthogonal.
In particular, $F\cap V$ is equal to the $g_R$-orthogonal of $N$ in $V$. Hence, we may take a
$g_R$-orthogonal basis $e_1,e_2,e_3,e_4$ of $T_pM$ with respect to which $N=\la e_1,e_2\ra$
and $F=\la e_3,e_4\ra$. This proves the lemma.
\end{proof}

By Theorem 1.4 in \cite{MS}, Theorem \ref{thm:main} follows from:

\begin{theorem}
\label{thm:bound-degree-normal-bundle}
There exists a constant $C$ with the following property. Suppose that the order of
$\phi\in\Isom(M,g)$
is finite and bigger than $2$,
and that the fixed point set $M^{\phi}$ has a connected component
$\Sigma\subset M$ which is an embedded orientable surface.
Denote by $\nu\to\Sigma$ be the normal bundle, and choose
orientations of $\Sigma$ and $\nu$. Then $|\deg\nu|\leq C$.
\end{theorem}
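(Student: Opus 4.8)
The plan is to analyse $\phi$ infinitesimally along $\Sigma$, extract the resulting topology and a curvature formula for $\deg\nu$, and then rule out an unbounded sequence of degrees by a compactness argument built on the three lemmas above.

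\emph{Infinitesimal structure along $\Sigma$.} Fix $p\in\Sigma$ and set $A:=d\phi_p\in\Isom(T_pM,g)$. Since $\Sigma\subseteq M^\phi$ we have $A|_{T_p\Sigma}=\id$, and since $\Sigma$ is a whole connected component of $M^\phi$ we have $(T_pM)^{A}=T_p\Sigma$. The group $\la A\ra$ is finite, hence compact, hence conjugate in $\O(3,1)$ into the maximal compact subgroup $K=\O(3)\times\O(1)$; thus there is an $A$-invariant $g$-orthogonal splitting $T_pM=P_+\oplus P_-$ with $P_+$ spacelike of dimension $3$ and $P_-$ timelike of dimension $1$. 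As $A$ preserves this splitting and fixes $T_p\Sigma$ pointwise, $T_p\Sigma=(T_p\Sigma\cap P_+)\oplus(T_p\Sigma\cap P_-)$. If $T_p\Sigma\cap P_-=0$ then $T_p\Sigma\subseteq P_+$ is spacelike, so $\nu_p':=(T_p\Sigma)^{\perp_g}$ is a Lorentzian plane on which $A$ acts with finite order and no nonzero fixed vector; the only such element of $\O(1,1)$ is $-\id$, forcing $A^2=\id$, against $\ord\phi>2$. Hence $T_p\Sigma\cap P_-=P_-$, so $T_p\Sigma$ has signature $(1,1)$ and $\nu_p'$ is positive definite, and $A|_{\nu_p'}$ is a finite-order element of $\O(2)$ with no nonzero fixed vector, i.e. a rotation $R_{\theta_p}$, with $\theta_p\neq\pi$ (again since $\ord\phi>2$).

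\emph{Topology of $\Sigma$.} Because $g|_{T\Sigma}$ has constant signature $(1,1)$, the $g$-orthogonal complement $\nu':=(T\Sigma)^{\perp_g}$ is a smooth rank-$2$ Euclidean subbundle of $TM|_\Sigma$ complementary to $T\Sigma$, so $\deg\nu=\pm\deg\nu'$; I will bound $\deg\nu'$. The angle $\theta_p$ lies in a discrete set of roots of unity, hence is a constant $\theta\in(0,2\pi)\setminus\{\pi\}$. As a component of the fixed set of an isometry $\Sigma$ is totally geodesic, and $g|_{T\Sigma}$ is a Lorentz metric on the closed orientable surface $\Sigma$; therefore $\chi(\Sigma)=0$ and $\Sigma\cong T^2$. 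Moreover, by total geodesy the normal exponential map identifies a tubular neighbourhood of $\Sigma$ with a disk bundle in $\nu'$ on which $\phi$ acts as the fibrewise rotation $R_\theta$.

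\emph{Degree as a curvature integral.} Since $\Sigma$ is totally geodesic, $\nabla^g$-parallel transport along curves in $\Sigma$ preserves both $T\Sigma$ and $\nu'$; hence for $X,Y$ tangent to $\Sigma$ the endomorphism $R^g(X,Y)$ preserves $\nu'$ and restricts there to the curvature of the induced connection on $\nu'$. With the chosen orientations this gives $2\pi\deg\nu'=\int_\Sigma\Omega$, where $\Omega(X,Y)=R^g(X,Y,\xi_1,\xi_2)$ for a local oriented $g$-orthonormal frame $(\xi_1,\xi_2)$ of $\nu'$; so $|\deg\nu'|\leq\frac1{2\pi}\int_\Sigma|R^g(X,Y,\xi_1,\xi_2)|\,dA_g$, the integrand being the fixed smooth Riemann tensor of $g$ evaluated on a $g$-orthonormal frame of $T_pM$ adapted to $T\Sigma\oplus\nu'$.

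\emph{The obstacle and how to remove it.} The previous estimate is not uniform by itself: $\O(3,1)$ is non-compact, so a $g$-orthonormal frame adapted to $T_p\Sigma$ may be arbitrarily boosted relative to the fixed reduction to $K$ (equivalently, $\Sigma$ may be very far from being tangent to the distribution $S$), making both the frame-dependent factor in $\Omega$ and the Lorentzian area of $\Sigma$ large. \textbf{This control of the boost of $\Sigma$ is the main difficulty.} I would argue by contradiction: let $\phi_i\in\Isom(M,g)$ have finite order $>2$, with fixed tori $\Sigma_i$ and $|\deg\nu_i|\to\infty$. Since an isometry is determined by its restriction to a neighbourhood of $\Sigma_i$, where $\phi_i$ is the rotation $R_{\theta_i}$, one may replace $\phi_i$ by a power and assume $\ord\phi_i$ is an odd prime or $4$, and (on a subsequence) that $\ord\phi_i$ and $\theta_i$ are either both constant or the order tends to infinity. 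Pick $p_i\in\Sigma_i$ with $p_i\to p_\infty$ and a frame $z_i$ over $p_i$ lying in the (compact) reduction of $F_gM$ to $K$, so $z_i\to z_\infty$. The crux is to show that $D\phi_i(z_i)\in F_gM$ subconverges, i.e. that $d\phi_i|_{T_{p_i}M}$, which is $\O(3,1)$-conjugate to $R_{\theta_i}\oplus\id_2$, stays bounded in $\O(3,1)$ when written in the frame $z_i$. For this I would use Lemma \ref{lemma:N-dins-V}: at points where $\nu_i'$ lies in $S$ the adapted splitting is $g_R$-orthogonal and $d\phi_i$ is $g_R$-orthogonal, which forces the boost to stay controlled there; and Lemma \ref{lemma:convergencia} then yields $\phi_i\to\phi_\infty\in\Isom(M,g)$ along a subsequence, whence $\Sigma_i$ converges to a component of $M^{\phi_\infty}$ and $\deg\nu_i$ stabilizes — a contradiction. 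When $\ord\phi_i$ stays bounded one can alternatively route the limiting data through Lemma \ref{lemma:fixed-submanifolds} applied to the compact group $\Isom(M,g)\cap\Isom(M,g_R)$, obtaining directly that only finitely many values of $\deg\nu$ occur.
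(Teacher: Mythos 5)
Your overall strategy is the right one, and you have correctly located the main difficulty (the non-compactness of $\O(3,1)$, i.e.\ the uncontrolled boost of frames adapted to $\Sigma$ relative to the fixed reduction to the maximal compact), but the two steps that actually resolve it are left as gaps. First, your compactness argument hinges on evaluating $d\phi_i$ ``at points where $\nu_i'$ lies in $S$'', and you never show that such points exist. This is precisely where the hypothesis $\deg\nu\neq 0$ (which one may assume, since otherwise there is nothing to prove) must enter, and it is the key trick of the paper's Lemma \ref{lemma:compact}: if the fibre of $\nu_i$ were at no point of $\Sigma_i$ contained in $S$, then $\nu_i\cap S|_{\Sigma_i}$ would be a rank-one subbundle of $\nu_i$, forcing $\deg\nu_i=0$. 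Without this observation the boost-control mechanism you describe (Lemma \ref{lemma:N-dins-V} giving $(d\phi_i)_{x_i}\in\Isom(T_{x_i}M,g_R)$, hence $D\phi_i(z_i)\in F_{g_R}M$ with $F_{g_R}M$ compact, hence Lemma \ref{lemma:convergencia}) has nothing to grab onto.

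Second, the endgame does not close. From $\phi_i\to\phi_\infty$ you conclude that ``$\Sigma_i$ converges to a component of $M^{\phi_\infty}$ and $\deg\nu_i$ stabilizes''; neither assertion is justified: $\phi_\infty$ need not have finite order, and $C^\infty$-convergence of the maps does not by itself control the fixed-point sets or their normal bundles. Likewise, your alternative of applying Lemma \ref{lemma:fixed-submanifolds} to $\Isom(M,g)\cap\Isom(M,g_R)$ fails because each $\phi_i$ is $g_R$-orthogonal only at the single point $x_i$, not globally, so there is no reason for $\phi_i$ (or any conjugate of it) to lie in that compact group. The paper instead converts the relative compactness of the set $S$ of such isometries into the statement that the union $\Isom^S(M,g)$ of components meeting $S$ consists of finitely many components, lists the finitely many subgroups $G_1,\dots,G_r$ of $\Isom^S(M,g)$ containing the identity component, conjugates each relevant finite-order $\phi$ into a maximal compact subgroup $K_i\leq G_i$, and only then applies Lemma \ref{lemma:fixed-submanifolds} to $K_i$ to obtain finitely many orbits of fixed surfaces and hence finitely many values of $|\deg\nu|$. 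Your infinitesimal analysis along $\Sigma$ (signature $(1,1)$ tangent plane, $\Sigma\cong T^2$, the curvature formula for $\deg\nu'$) is correct but is not needed once the argument is organized this way.
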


Note that  the normal bundle $\nu$ is orientable because by assumption both $\Sigma$ and $M$ are
orientable. Before proving Theorem \ref{thm:bound-degree-normal-bundle} we will prove the following
technical result.

\begin{lemma}
\label{lemma:compact}
Let $S$ be the set of finite order elements $\phi\in\Isom(M,g)$ of order bigger than $2$
such that $M^{\phi}$ contains a connected component which is an orientable embedded surface
whose normal bundle has nonzero degree. Then $S$ is relatively compact.
\end{lemma}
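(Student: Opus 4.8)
The goal is to show that $S$, the set of finite-order isometries of order $>2$ whose fixed-point set has an orientable embedded surface component with nonzero-degree normal bundle, is relatively compact in $\Isom(M,g)$. The natural strategy is to take an arbitrary sequence $(\phi_i)\subset S$ and extract a subsequence converging in $\Isom(M,g)$, using Lemma \ref{lemma:convergencia} as the convergence criterion. To apply that lemma I need a converging sequence $(z_i)\subset F_gM$ such that $(D\phi_i(z_i))$ also converges. I will manufacture such frames from the geometry of the fixed surfaces $\Sigma_i$.

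First I would, for each $i$, pick the surface component $\Sigma_i\subset M^{\phi_i}$ guaranteed by membership in $S$, and pick a point $p_i\in\Sigma_i$. Since $M$ is compact, after passing to a subsequence $p_i\to p\in M$. At $p_i$ the tangent space splits $g$-orthogonally as $T_{p_i}M=T_{p_i}\Sigma_i\oplus\nu_{p_i}$, with $\phi_i$ acting as the identity on the first factor; the normal action of $\phi_i$ on $\nu_{p_i}$ is a finite-order linear map, and since $\nu\to\Sigma$ has nonzero degree this normal action must be nontrivial rotation (not $\pm\id$), because a nonzero-degree real plane bundle cannot carry a global complex structure fixed by a reflection and an orientation-reversing involution fixes no nontrivially-oriented bundle — more concretely, the Euler class argument forces $\phi_i|_{\nu}$ to be a rotation by a nonzero angle. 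Using the reduction $TM\simeq L\oplus S$ fixed earlier, I would build a $g_R$-orthonormal-type adapted frame $z_i$ at $p_i$ (a frame in $F_gM$, say obtained by orthonormalizing an adapted basis of $T_{p_i}\Sigma_i$ and $\nu_{p_i}$), chosen so that the sequence $(z_i)$ converges in $F_gM$ — this is possible after a further subsequence by compactness of the relevant fiber, since the adapted frames live in a compact set once we fix the reduction.

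The key point is then that $D\phi_i(z_i)$ also lies in a compact set and hence subconverges. Here is where the hypotheses "order $>2$" and "nonzero degree" really enter: by Lemma \ref{lemma:N-dins-V}, an element of $\Isom(T_pM,g)$ that fixes a plane pointwise, is not an involution, and whose moving plane lies in the spacelike distribution $V=S$, must actually preserve the Riemannian metric $g_R(p)$. So if I can arrange that the normal plane $\nu_{p_i}$ is spacelike (contained in the fiber of $S$), then $\phi_i$ acts on $T_{p_i}M$ as a $g_R$-orthogonal transformation, and $D\phi_i(z_i)$ stays in the compact set of $g_R$-orthonormal-adapted frames, giving the needed convergence. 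The remaining case is that $\nu_{p_i}$ is \emph{not} spacelike for the chosen point $p_i$; but I expect that one can always choose $p_i\in\Sigma_i$ so that $\nu_{p_i}$ is spacelike, because along a compact surface $\Sigma_i$ the signature of the induced metric on $\nu$ can vary, and genericity plus the topological nonzero-degree assumption should prevent $\nu$ from being uniformly timelike or degenerate — one can also reduce to this by noting that if $\nu_{p_i}$ were timelike at every point of $\Sigma_i$ then $\Sigma_i$ would be spacelike, and then a separate (easier) compactness argument applies since $\phi_i$ preserves a Riemannian metric on a neighborhood of $\Sigma_i$ via the induced structure.

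**Main obstacle.** The delicate step is controlling the causal type of the normal plane $\nu_{p_i}$ and producing a point $p_i$ where Lemma \ref{lemma:N-dins-V} applies — i.e., reducing to the case $N\subset V$. Once that reduction is made, $\phi_i$ becomes an honest Riemannian isometry of $(T_{p_i}M, g_R)$, the frames $D\phi_i(z_i)$ are confined to a compact orbit, Lemma \ref{lemma:convergencia} yields a convergent subsequence of $(\phi_i)$, and its limit is again an isometry of finite order (the order being bounded via the normal rotation angle being bounded away from $0$, which in turn follows from Theorem \ref{thm:bound-degree-normal-bundle} if one wishes, or more directly from a compactness/continuity argument on $\Isom(M,g)$). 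Hence the limit lies in the closure of $S$ inside $\Isom(M,g)$, establishing relative compactness.
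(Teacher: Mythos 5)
Your overall strategy is the same as the paper's: locate a point of $\Sigma_i$ at which the normal plane lies inside the rank-$3$ summand $S$ of the fixed reduction $TM\simeq L\oplus S$, invoke Lemma \ref{lemma:N-dins-V} to conclude that $d\phi_i$ at that point preserves $g_R$, so that suitable frames $z_i$ and their images $D\phi_i(z_i)$ both lie in the compact set $F_{g_R}M$ of $g_R$-orthonormal frames, and then apply Lemma \ref{lemma:convergencia} to extract a convergent subsequence.

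There is, however, a genuine gap at the one step where the nonzero-degree hypothesis actually enters: you do not correctly justify the existence of a point $p_i\in\Sigma_i$ with $\nu_{p_i}\subset S_{p_i}$. The appeal to ``genericity'' is not an argument, and your fallback dichotomy (``if $\nu$ is timelike at every point then $\Sigma_i$ is spacelike, and an easier argument applies'') does not exhaust the cases: the negation of ``$\nu_{p}\subset S_p$ for some $p$'' is not ``$\nu_p$ is timelike for all $p$''. You are also conflating ``$\nu_p$ is $g$-spacelike'' with ``$\nu_p$ is contained in the fiber $S_p$''; the latter is a strictly stronger condition depending on the chosen reduction, and $\nu_p$ can perfectly well be spacelike at every point without ever being contained in $S_p$. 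The correct argument is purely topological: $\nu_i$ has rank $2$ and $S|_{\Sigma_i}$ has rank $3$ inside the rank-$4$ bundle $TM|_{\Sigma_i}$, so $\nu_{i,x}\cap S_x$ has dimension at least $1$ for every $x\in\Sigma_i$; if $\nu_{i,x}\not\subset S_x$ for \emph{every} $x$, this intersection has constant dimension $1$ and defines a real line subbundle of $\nu_i$, which forces $\deg\nu_i=0$, contradicting the hypothesis. Without this (or an equivalent) argument the proof does not close. A minor further remark: relative compactness only requires every sequence in $S$ to subconverge somewhere in $\Isom(M,g)$, so your closing discussion of the order of the limit and its membership in the closure of $S$ is unnecessary.
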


To define the degree of the normal bundle of an orientable and connected
embedded surface in $M$ one has to choose orientations of the surface
and the bundle, but the condition that the degree is nonzero is independent
of the choices.

\begin{proof}
Let $(\phi_i)$ be a sequence of elements in $S$. We are going to prove that $(\phi_i)$
has a converging subsequence. Suppose that $\Sigma_i$ is a connected component of
$M^{\phi_i}$ and that the normal bundle $\nu_i\to\Sigma_i$ has nonzero degree.
We identify $\nu_i$ with a subbundle of $TM|_{\Sigma_i}$ in the usual way. To be precise,
for any $x\in\Sigma_i$ the derivative of $\phi_i$ gives a linear automorphism
$d\phi_i\in \Aut(T_xM)$ of finite order, and there is a splitting in $d\phi_i$-invariant subspaces
$T_xM=A_x\oplus B_x$, where $d\phi_i$ acts trivially on $A_x$ and the restriction of $d\phi_i$
to $B_x$ does not have the eigenvalue $1$.
We have $A_x=T_x\Sigma$, and as $x$ moves along $\Sigma_i$ the spaces $B_x$ span
a real vector bundle of rank $2$ which can be identified with $\nu_i$.

We claim that there is some point $x_i\in\Sigma_i$ such that the fiber of $\nu_i$
over $x_i$ is contained inside $S_{x_i}$. Indeed, if this were not the case then
$\Lambda_i:=\nu_i\cap S|_{\Sigma_i}$ would be a (real) line subbundle of $\nu_i$.
This would force $\nu_i$ to have degree $0$, a contradiction.

It follows from Lemma \ref{lemma:N-dins-V} that $(d\phi_i)_{x_i}\in\Isom(T_{x_i}M,g_R)$.
In other words, if we denote by $F_{g_R}M\subset F_gM$ the space of $g_R$-orthogonal
frames, for any $i$ the intersection $F_{g_R}M\cap D\phi_i(F_{g_R}M)\neq\emptyset$.
Since $F_{g_R}M$ is compact, we may assume, passing to a subsequence if necessary, that
there exist points $z_i\in F_{g_R}M$ such that $D\phi_i(z_i)$ converges in $F_{g_R}M$.
Applying Lemma \ref{lemma:convergencia} we conclude that $\phi_i$ has a converging
subsequence.
\end{proof}

We are now ready to prove Theorem \ref{thm:bound-degree-normal-bundle}.
Let $\Isom^S(M,g)$ be the union of all connected components of $\Isom(M,g)$ that
contain points of $S$. Since $S$ is relatively compact, $\Isom^S(M,g)$ has
finitely many connected components. Note that there is no reason to assume that
$\Isom^S(M,g)$ is a subgroup of $\Isom(M,g)$. The collection of
all subgroups of $\Isom^S(M,g)$ whose identity component coincides with that
of $\Isom(M,g)$ is obviously finite.
Denote these subgroups by $G_1,\dots,G_r$.

Each of the groups $G_i$ has finitely many connected components, so we
can choose, for every $i$, a maximal compact subgroup $K_i$ of
$G_i$, which has the property that any compact subgroup of $G_i$ is conjugate
to a subgroup of $K_i$. The existence of $K_i$ is pretty standard if $G_i$
is connected; for the general case, see e.g. \cite[Theorem 14.1.3]{HN}.

Let $\sS_i$ be the collection of all embedded surfaces $\Sigma$ for which there
is some $k\in K_i$ such that $\Sigma$ is a connected component of $X^k$.
For every $\Sigma\in\sS_i$ let $\nu_{\Sigma}$ denote the normal bundle of $\Sigma$.
By Lemma \ref{lemma:fixed-submanifolds} the set of $K_i$-orbits in $\sS_i$ is
finite. If $\Sigma,\Sigma'\in\sS_i$ satisfy $\Sigma'=k\Sigma$ for some $k\in K_i$
then the bundles $\nu_{\Sigma}\to\Sigma$ and $\nu_{\Sigma'}\to\Sigma'$ are isomorphic,
so $|\deg\nu_{\Sigma}|=|\deg\nu_{\Sigma'}|$. It follows that
the number $C_i=\max\{|\deg\nu_{\Sigma}|:\Sigma\in\sS_i\}$ is finite.

Set
$C=\max\{C_1,\dots,C_r\}$. We claim that this number $C$ has the property stated
in Theorem \ref{thm:bound-degree-normal-bundle}. Indeed, if $\phi\in\Isom(M,g)$
has finite order bigger than $2$ and $\Sigma\subset M$ is an embedded surface which is a connected
component of $M^{\phi}$ with $|\deg\nu_{\Sigma}|\neq 0$ then $\phi$ and all its powers
belong to $S$. It follows that $\phi\in G_i$ for some $i$. Since $\phi$ has finite order, it
is contained in a compact subgroup of $G_i$, so a conjugate of $\phi$ belongs to $K_i$.
This finishes the proof of the theorem.

\end{document}